  \newtheorem{thm}{Theorem}
\newcommand{\para}{\vspace{3pt plus 2pt}
\refstepcounter{defi}\textbf{\arabic{section}.\arabic{defi}.}\, }
\newcommand{\vo}{\mbox{{\boldmath$o$}}}
\newcommand{\vu}{\mbox{{\boldmath$u$}}}
\newcommand{\vx}{\mbox{{\boldmath$x$}}}
\newcommand{\PP}{{\mathbb P}}
\newcommand{\RR}{{\mathbb R}}
\newcommand{\T}{{\mathrm{T}}}
\newcommand{\dis}
                 {{\mathrel{\scriptstyle{\triangle}}}}
\newcommand{\GL}{{\mathrm{GL}}}
\newcommand{\diag}{{\mathrm{diag}}}
\newcommand{\abc}{{\alpha,\beta,\gamma}}
\newcommand{\alphaq}{{\overline{\vphantom{\beta}\alpha}}}
\newcommand{\betaq}{{\overline\beta}}
\newcommand{\gammaq}{{\overline{\vphantom{\beta}\gamma}}}
\newcommand{\abcq}{{\,\alphaq,\betaq,\gammaq\,}}
\let\phi=\varphi
\let\theta=\vartheta
\newcommand{\weit}{\renewcommand{\arraystretch}{1.15}}
\newcommand{\schieb}[1]{\hspace{#1mm}}
\newcommand{\fuell}[1]{\hfill{#1}\hfill}
\newcommand{\abst}[1]{\hspace{0mm}{#1}\hspace{0mm}}
\newcommand{\Matrixfeld}[4]{\left#1\!\begin{array}{*{#3}{c}}#4\end{array}\!\right#2}
\newcommand{\RMatrixfeld}[4]{\left#1\!\begin{array}{*{#3}{r}}#4\end{array}\!\right#2}
\newcommand{\Mat}{\Matrixfeld()}
\newcommand{\RMat}{\RMatrixfeld()}
\newenvironment{proof}
    {\begin{trivlist} \item {\emph{Proof}.}} 
    {{}\hfill $\square$ \end{trivlist}} 
\newcounter{abbildung} 
\date{}
\begin{document}
\title{Cayley's surface revisited}
\author{Hans Havlicek}

\maketitle

\centerline{\emph{Dedicated to Gunter Wei{\ss} on the occasion of his 60{th}
birthday, in friendship}}

\begin{abstract}
Cayley's (ruled cubic) surface carries a three-parameter family of
twisted cubics. We describe the contact of higher order and the
dual contact of higher order for these curves and show that there
are three exceptional cases.
\par
\emph{2000 Mathematics Subject Classification:} 53A20, 53A25,
53A40.
\par
\emph{Keywords:} Cayley surface, twisted cubic, contact of higher
order, dual contact of higher order, twofold isotropic space.
\end{abstract}

\section{Introduction}\label{se:introduction}

\para The geometry on Cayley's surface and the geometry in the
ambient space of Cayley's surface has been investigated by many
authors from various points of view. See, among others,
\cite{brau-67c}, \cite{husty-84}, \cite{koch-68},
\cite{oehler-69}, and \cite{wiman-36}. In these papers the reader
will also find a lot of further references.
\par
As a by-product of a recent publication \cite{havl-list-03z}, it turned out
that the Cayley surface (in the real projective $3$-space) carries a
one-parameter family of twisted cubics which have mutually contact of order
four. These curves belong to a well-known three-parameter family of twisted
cubics $c_\abc$ on Cayley's surface; cf.\ formula (\ref{eq:abc_proj}) below.
All of them share a common point $U$ with a common tangent $t$, and a common
osculating plane $\omega$, say. However, according to
\cite[pp.~96--97]{brau-64} such a one-parameter family of twisted cubics with
contact of order four should not exist: ``\emph{Zwei Kubiken dieser Art, die
einander in $U$ mindestens f\"unfpunktig ber\"uhren, sind identisch.''}
\par
The aim of the present communication is to give a complete description of the
order of contact (at $U$) for the twisted cubics mentioned above. In
particular, it will be shown in Theorem \ref{thm:1} that the twisted cubics
with parameter $\beta=\frac32$ play a distinguished role, a result that seems
to be missing in the literature. Furthermore, since the order of contact is not
a self-dual notion, we also investigate the order of dual contact for twisted
cubics $c_\abc$. Somewhat surprisingly, in the dual setting the parameters
$\beta=\frac52$ and $\beta=\frac73$ are exceptional; see Theorem \ref{thm:2}.
\par
In Section \ref{para:2.4} we show that certain results of Theorem
\ref{thm:1} have a natural interpretation in terms of the
\emph{twofold isotropic geometry} which is based on the absolute
flag $(U,t,\omega)$, and in terms of the \emph{isotropic geometry}
in the plane $\omega$ which is given by the flag $(U,t)$. Section
\ref{para:3.2} is devoted to the interplay between Theorem
\ref{thm:1} and Theorem \ref{thm:2}.
\par\para
The calculations which are presented in this paper are long but
straightforward. Hence a computer algebra system (Maple V) was
used in order to accomplish this otherwise tedious job.
Nevertheless, we tried to write down all major steps of the
calculations in such a form that the reader may verify them
without using a computer.
\par

\section{Contact of higher order}\label{se:contact}

\para
Throughout this paper we consider the three-dimensional real
projective space $\PP_3(\RR)$. Hence a point is of the form
$\RR\vx$ with $\vx=(x_0,x_1,x_2,x_3)^\T$ being a non-zero vector
in $\RR^{4\times 1}$. We choose the plane $\omega$ with equation
$x_0=0$ as \emph{plane at infinity}, and we regard $\PP_3(\RR)$ as
a projectively closed affine space. For the basic concepts of
projective differential geometry we refer to \cite{bol-50} and
\cite{degen-94}.

\par\para
The following is taken from \cite{brau-64}, although our notation
will be slightly different. \emph{Cayley's} (\emph{ruled cubic\/})
\emph{surface} is, to within collineations of $\PP_3(\RR)$, the
surface $F$ with equation
\begin{equation}\label{eq:cayley}
  3x_0x_1x_2-x_1^3-3x_3x_0^2=0.
\end{equation}
The line $t:x_0=x_1=0$ is on $F$. More precisely, it is a torsal generator of
second order and a directrix for all other generators of $F$. The point
$U=\RR(0,0,0,1)^\T$ is the cuspidal point on $t$. In Figure \ref{abb1} a part
of the surface $F$ is displayed in an affine neighbourhood of the point $U$. In
contrast to our general setting, $x_3=0$ plays the role of the plane at
infinity in this illustration.

\par
On the surface $F$ there is a three-parameter family of cubic
parabolas which can be described as follows: Each triple
$(\abc)\in\RR^3$ with $\beta\neq 0$ gives rise to a function
\begin{equation*}
\begin{array}{l}
\Phi_\abc :\RR^{2\times 1}\to \RR^{4\times 1}: \vu=(u_0,u_1)^\T\mapsto\hfill \\
\schieb{2.0} \displaystyle
  \left(u_0^3, u_0^2(u_1-\gamma u_0),
 \frac{u_0(u_1^2+\alpha u_0^2)}{\beta},
 \frac{(u_1-\gamma u_0)}{3\beta}\left(3(u_1^2+\alpha u_0^2) - \beta(u_1-\gamma
 u_0)^2\right)\right)^\T\schieb{-0.5}.
 \end{array}
\end{equation*}
If moreover $\beta\neq 3$ then $\Phi_\abc$ yields the mapping
\begin{equation}\label{eq:abc_proj}
   \PP_1(\RR)\to \PP_3(\RR): \RR\vu\mapsto \RR(\Phi_\abc(\vu));
\end{equation}
its image is a \emph{cubic parabola\/} $c_\abc\subset F$. All
these cubic parabolas have the common point $U$, the common
tangent $t$ and the common osculating plane $\omega$. We add in
passing that for $\beta=3$ we have
$\Phi_{\alpha,3,\gamma}\left((0,u_1)^\T\right)=\vo$ for all
$u_1\in\RR$, whereas the points of the form
$\RR\left(\Phi_{\alpha,3,\gamma}((1,u_1)^\T)\right)$ comprise the
affine part of a \emph{parabola\/}, $c_{\alpha,3,\gamma}$ say,
lying on $F$. Each curve $c_\abc$ ($\beta\neq 0$) is on the
\emph{parabolic cylinder\/} with equation
\begin{equation}\label{eq:zylinder}
    \alpha x_0^2-\beta x_0x_2+(x_1+\gamma x_0)^2 = 0.
\end{equation}
The mapping $( \abc ) \mapsto c_\abc$ is injective, since
different triples $(\abc)$ yield different parabolic cylinders
(\ref{eq:zylinder}).
\par
Figure \ref{abb2} shows some generators of $F$, and five cubic
parabolas $c_{\alpha,\beta,0}$ together with their corresponding
parbolic cylinders, where $\alpha$ ranges in
$\{-\frac32,-\frac34,0,\frac34,\frac32 \}$ and $\beta=\frac32$.
{\unitlength1.0cm
      \begin{center}
      \begin{minipage}[t]{4.0cm}
         \begin{picture}(4.0,5.18)
         \put(0.0 ,0.0){\includegraphics[width=4.0cm]{./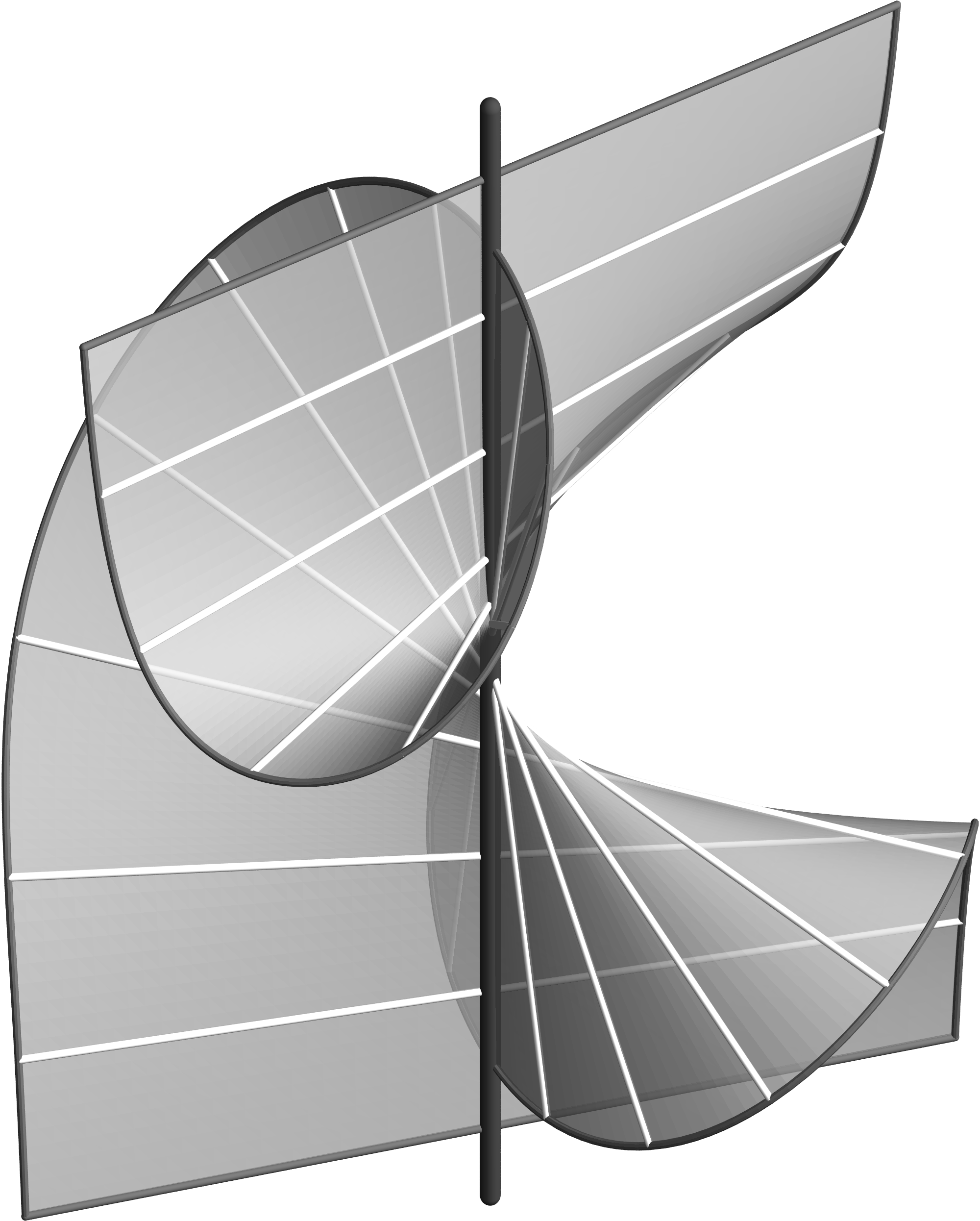}}
         \put(1.75,4.5){$t$}
         \put(2.1,2.1){$U$}
         \put(3.75,4.5){$F$}
      \end{picture}
      {\refstepcounter{abbildung}\label{abb1}
       \centerline{Figure \ref{abb1}.}}
      \end{minipage}
      \hspace{2.5cm}
      \begin{minipage}[t]{4.0cm}
         \begin{picture}(4.0,5.18)
         \put(0.0 ,0.0){\includegraphics[width=4.0cm]{./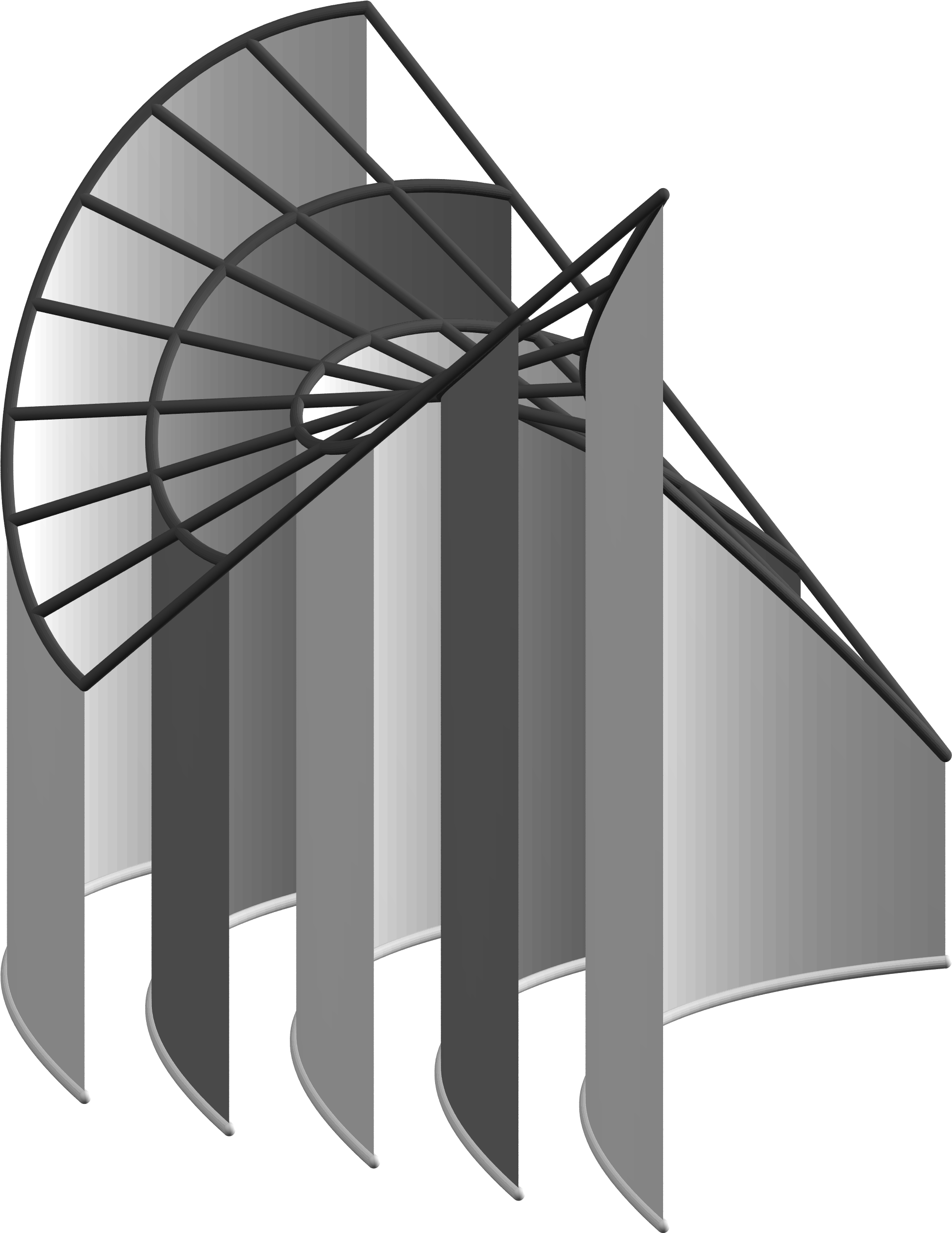}}
      \end{picture}
      {\refstepcounter{abbildung}\label{abb2}
       \centerline{Figure \ref{abb2}.}}
      \end{minipage}
      \end{center}
}%

\par\para
Our first goal is to describe the order of contact at $U$ of cubic parabolas
given by (\ref{eq:abc_proj}). Since twisted cubics with contact of order five
are identical \cite[pp.~147--148]{bol-50}, we may assume without loss of
generality that the curves are distinct, and that the order of contact is less
or equal four.

\begin{thm}\label{thm:1}
Distinct cubic parabolas $c_\abc$ and $c_\abcq$ on Cayley's ruled
surface have

\vspace{-10pt plus 1pt minus 1pt}
\begin{enumerate}\itemsep0pt 

\item second order contact at $U$ if, and only if, $\beta=\betaq$
or $\beta=3-\betaq$;

\item third order contact at $U$ if, and only if, $\beta=\betaq$
and $\gamma=\gammaq$, or  $\beta=\betaq=\frac32$;

\item fourth order contact at $U$ if, and only if,
$\beta=\betaq=\frac32$ and $\gamma=\gammaq$.
\end{enumerate}
\end{thm}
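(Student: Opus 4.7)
The plan is to reduce Theorem~\ref{thm:1} to a comparison of Taylor coefficients of the curves in a well-chosen local parameter at $U$. I work in the affine chart $x_3\neq 0$ with coordinates $X_i = x_i/x_3$, so that $U$ becomes the origin, the tangent $t$ is the $X_2$-axis, and the osculating plane $\omega$ is $X_0 = 0$. Setting $u_1=1$ and $u_0=s$ in $\Phi_\abc$ gives an analytic parametrization of $c_\abc$ near $U$ with $s = 0 \leftrightarrow U$; since $\beta\notin\{0,3\}$ one has $X_2'(0) = 3/(3-\beta) \neq 0$, so $\tau := X_2$ is a uniformizing parameter. Writing the curve as the graph $(X_0(\tau), X_1(\tau), \tau)$, two such graphs have contact of order $k$ at $U$ if and only if their Taylor series in $\tau$ coincide modulo $\tau^{k+1}$; since $X_0 = O(\tau^3)$ and $X_1 = O(\tau^2)$, the only nontrivial invariants are $[\tau^n]X_1$ for $n\in\{2,3,4\}$ and $[\tau^n]X_0$ for $n\in\{3,4\}$.

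Write $D(s)$ for the fourth component of $\Phi_\abc(s,1)$, so that $X_0 = s^3/D$, $X_1 = s^2(1-\gamma s)/D$ and $X_2 = s(1+\alpha s^2)/(\beta D)$. Expanding $1/D$ and inverting $\tau = X_2(s)$ as a power series in $s$, then substituting, yields
\begin{align*}
X_0(\tau) &= \tfrac{\beta(3-\beta)^2}{9}\,\tau^3 + \tfrac{2\beta(3-\beta)^2(\beta-1)\gamma}{9}\,\tau^4 + O(\tau^5),\\
X_1(\tau) &= \tfrac{\beta(3-\beta)}{3}\,\tau^2 + \tfrac{2\beta(3-\beta)(2\beta-3)\gamma}{9}\,\tau^3 + \Bigl(B(\beta,\gamma) + \tfrac{\beta(3-\beta)^2(2\beta-3)}{27}\,\alpha\Bigr)\tau^4 + O(\tau^5)
\end{align*}
for some explicit polynomial $B$. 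The decisive structural feature is that $\alpha$ occurs only in the $\tau^4$-coefficient of $X_1$, and always with the factor $(2\beta-3)$.

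Now (a) follows immediately, since $[\tau^2]X_1 = \beta(3-\beta)/3$ is symmetric in $\beta \leftrightarrow 3-\beta$: $\beta(3-\beta) = \betaq(3-\betaq)$ factors as $(\beta-\betaq)(\beta+\betaq-3)=0$. For (b), in the sub-case $\beta = 3-\betaq \neq \betaq$ of (a), equality of $[\tau^3]X_0$ with the corresponding coefficient of $c_\abcq$ becomes $\beta(3-\beta)^2 = (3-\beta)\beta^2$, forcing $\beta = 3/2 = \betaq$, a contradiction; hence $\beta = \betaq$, after which equality of $[\tau^3]X_1$ reduces to $(2\beta-3)(\gamma-\gammaq) = 0$. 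For (c), under the hypotheses of (b) I distinguish two branches. In the branch $\beta = \betaq$, $\gamma = \gammaq$, the coefficient $[\tau^4]X_0$ already agrees; equality of $[\tau^4]X_1$ then reduces via the $\alpha$-coefficient above to $(2\beta-3)(\alpha-\alphaq) = 0$, and since distinctness of the curves forces $\alpha \neq \alphaq$, one must have $\beta = 3/2$. In the branch $\beta = \betaq = 3/2$, specialising gives $[\tau^4]X_0 = 3\gamma/8$, forcing $\gamma = \gammaq$; equality of $[\tau^4]X_1$ is then automatic, because its $\alpha$-coefficient $\beta(3-\beta)^2(2\beta-3)/27$ vanishes at $\beta = 3/2$. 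Combining the two branches yields (c). The main obstacle is the mechanical bookkeeping of the $\tau^4$-coefficients --- routine but tedious, which is presumably why the author resorts to a computer algebra system.
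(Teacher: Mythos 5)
Your proposal is correct, and it reaches the paper's conclusions by a genuinely different route. The paper never works in an affine chart: it realises $c_{0,2,0}$ as (part of) the intersection of two quadrics $Q_1=0$, $Q_2=0$, precomputes the expansion coefficients $h_{nm}$ of $Q_n\circ G\circ\Phi_{0,2,0}$ for a \emph{general} lower triangular matrix $G$, and then reduces the pair $(c_\abc,c_\abcq)$ to the model curve via the collineations $M_\abc$, so that contact of order $k$ becomes the vanishing of $h_{n0},\dots,h_{nk}$ for the specific matrix $G=M_\abc^{-1}M_\abcq$. You instead pass to the chart $x_3\neq0$, take the coordinate $\tau=X_2$ along the common tangent as a uniformizing parameter, and compare Taylor coefficients of the graph functions $X_0(\tau),X_1(\tau)$ directly; I have checked your coefficients ($[\tau^2]X_1=\beta(3-\beta)/3$, $[\tau^3]X_1=\tfrac29\beta(3-\beta)(2\beta-3)\gamma$, $[\tau^4]X_0=\tfrac29\beta(3-\beta)^2(\beta-1)\gamma$, and the $\alpha$-coefficient $\beta(3-\beta)^2(2\beta-3)/27$ of $[\tau^4]X_1$) and they are all right, and the case analysis in (a)--(c), including the collapse of the branch $\beta=3-\betaq\neq\betaq$ at order three and the use of injectivity of $(\abc)\mapsto c_\abc$ to get $\alpha\neq\alphaq$, matches the paper's. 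What the paper's setup buys is reusability: the same table of coefficients $h_{nm}$ is fed a second matrix in the proof of Theorem \ref{thm:2} on dual contact, whereas your chart computation would have to be redone from scratch for the dual curves. What your approach buys is transparency: the distinguished value $\beta=\tfrac32$ is visibly forced by the single factor $(2\beta-3)$ multiplying both $\gamma$ in $[\tau^3]X_1$ and $\alpha$ in $[\tau^4]X_1$, and the whole argument is elementary and self-contained. One point worth stating explicitly if you write this up: the criterion ``contact of order $k$ iff the graphs over the common tangent agree $\bmod\ \tau^{k+1}$'' is the standard one and is equivalent to the intersection-multiplicity criterion from Bol that the paper uses, but since the theorem is about a projectively defined notion you should say in one line why the affine graph comparison computes it (both curves are regular at $U$ with $X_2'(0)=3/(3-\beta)\neq0$, so $\tau$ is an admissible common parameter).
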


\begin{proof}
We proceed in two steps:
\par
(i) First, we consider the quadratic forms
\begin{equation*}
 Q_1:\RR^{4\times 1}\to\RR : \vx \mapsto 6x_0x_3-2x_1x_2,
 \;\;
 Q_2:\RR^{4\times 1}\to\RR : \vx \mapsto 4x_2^2-6x_1x_3
\end{equation*}
which determine a hyperbolic paraboloid and a quadratic cone,
respectively. Their intersection is the cubic parabola
$c_{0,2,0}$, given by
\begin{equation*}
\RR(u_0,u_1)^\T\mapsto
  \RR\left(u_0^3, u_0^2u_1,
      \frac{u_0u_1^2}{2},
   \frac{u_1^3}{6}\right)^\T,
\end{equation*}
and the line $x_2=x_3=0$. The tangent planes of the two surfaces
at $U$ are different.
\par
Next, let $G:=(g_{ij})_{0\leq i,j\leq 3}\in\GL_4(\RR)$ be a lower triangular
matrix, i.e., $g_{ij}=0$ for all $j>i$. The collineation which is induced by
such a matrix $G$ fixes the point $U$, the line $t$, and the plane $\omega$; it
takes $c_{0,2,0}$ to a cubic parabola, say $c'$. In order to determine the
order of contact of $c_{0,2,0}$ and $c'$ we follow \cite[p.~147]{bol-50}. As
$U=\RR\left(\Phi_{0,2,0}((0,1)^\T)\right)$, so we expand for $n=1,2$ the
functions\footnote{Observe that sometimes we do not distinguish between a
linear mapping and its canonical matrix.}
\begin{equation}\label{eq:expand}
    H_n:\RR\to \RR : u_0\mapsto \left(Q_n\circ
    G\circ\Phi_{0,2,0}\right)\left((u_0,1)^\T\right)
    =:\sum_{m=0}^6h_{nm}u_0^m
\end{equation}
in terms of powers of $u_0$ and obtain
\begin{equation}\label{eq:koeffizienten}
\begin{array}{l@{\quad\;}l}
  h_{10} = h_{11} = h_{12}=0, &
  h_{13} = g_{00}g_{33}-g_{11}g_{22},\\
  h_{14} = 3g_{00}g_{32}-g_{10}g_{22}-2g_{11}g_{21},&
  h_{20} = h_{21}=0, \\
  h_{22} = g_{22}^2-g_{11}g_{33}, &
  h_{23} = -g_{10}g_{33} -3g_{11}g_{32}  +4g_{21}g_{22}, \\
  h_{24} = -6g_{11}g_{31}-3g_{10}g_{32}\\
  \hphantom{h_{24} = {}}\;\;+4g_{20}g_{22}+4g_{21}^2; & 
\end{array}
\end{equation}
the remaining coefficients $h_{15},h_{16},h_{25},h_{26}$ will not
be needed. Note that the matrix entry $g_{30}$ does not appear in
(\ref{eq:koeffizienten}).
\par
(ii) We consider the collineation of $\PP_3(\RR)$ which is induced
by the regular matrix
\begin{equation*}
M_\abc:=\frac{1}{18\beta(\beta-3)}\,
      \Mat4{  3\beta        & 0         & 0& 0\\
             -3\beta \gamma & 3\beta    & 0& 0\\
              3\alpha       & 0         & 6& 0\\
      \gamma(-3\alpha+\beta\gamma^2)&3(\alpha-\beta\gamma^2)&6\gamma(\beta-1)&-6(\beta-3)
           },
\end{equation*}
where $(\abc)\in\RR^3$ and $\beta\neq 0,3$. Obviously, it fixes
the point $U$ and takes $c_{0,2,0}$ to $c_\abc$, since
\begin{equation*}
  \Phi_\abc=6(\beta-3)M_\abc\circ\Phi_{0,2,0}.
\end{equation*}
The (irrelevant) scalar factor in the definition of $M_\abc$
enables us to avoid fractions in the matrix
\begin{equation*}
M_\abc^{-1} =
      \Mat4{6(\beta-3)       &0&0&0\\
            6\gamma(\beta-3) &6(\beta-3)    &0&0\\
           -3\alpha(\beta-3)    &0&3\beta(\beta-3) &0\\
            \gamma(3\alpha-3\alpha\beta-2\beta\gamma^{2})&3(\alpha-\beta\gamma^{2})&3\beta\gamma(\beta-1)&-3\beta}.
\end{equation*}
The order of contact at $U$ of the cubic parabolas $c_\abc$ and
$c_\abcq$ coincides with the order of contact at $U$ of
$c_{0,2,0}$ and that cubic parabola which arises from $c_{0,2,0}$
under the action of the matrix
\renewcommand{\abst}[1]{\hspace{0.4mm}{#1}\hspace{0.4mm}}
\begin{equation*}
\begin{array}{l}
2\betaq(\betaq-3) \, M_\abc^{-1} \cdot M_\abcq = {}\hfill
\\
\schieb{1}
    =\schieb{-1}
    \left(\schieb{-2}\begin{array}{*{3}{c@{\schieb2}}c}
    2\betaq ( \beta-3 ) &0&0&0
    \\
    2\betaq ( \beta\fuell{-} 3 )  (\gamma\fuell{-}\gammaq ) & 2\betaq ( \beta-3 ) &0&0
    \\
    (\beta\abst{-}3) (\alphaq\beta \abst{-}\alpha\betaq ) &0&2\beta ( \beta-3 ) &0
    \\
    * &\betaq(\alpha\abst{-}\beta\gamma^2) \abst{-} \beta(\alphaq\abst{-}\betaq\,\gammaq^2)
      &2\beta (\beta\gamma \abst{-} \betaq\,\gammaq \abst{-} \gamma \abst{+} \gammaq)
      &2\beta (\betaq\abst{-}3)
          \end{array}\schieb{-2}\right)\schieb{-0.5}.
\end{array}
\end{equation*}
This matrix takes over the role of the matrix $G$ from the first
part of the proof. (Its entry in the south-west corner has a
rather complicated form and will not be needed). Therefore
$c_\abc$ and $c_\abcq$ have contact of order $k$ at $U$ if, and
only if, in (\ref{eq:expand}) the coefficients $h_{n0}$, $h_{n1}$,
\ldots $h_{nk}$ vanish for $n=1,2$.
\par
By (\ref{eq:koeffizienten}), this leads for $k=2$ to the single
condition
\begin{equation*}
  h_{22}=4\beta(\beta-3)(3-\beta-\betaq)(\betaq-\beta)=0
\end{equation*}
which proves the assertion in (a). By virtue of (a), for $k=3$
there are two cases. If $\beta=\betaq$ then $h_{13}$ vanishes and
we obtain the condition
\begin{equation*}
   h_{23} = 8 \beta^{2} ( \beta-3 )  (2 \beta-3 ) ( \gammaq-\gamma )=0,
\end{equation*}
whereas $\beta=3-\betaq$ yields
\begin{equation*}
  h_{13}  =  4\beta  ( \beta-3 ) ^{2} ( 2 \beta-3 )=0,
  \quad
  h_{23}  =  4\beta  ( \beta-3 ) ^{2} ( 2 \beta-3 ) ( \gamma+2 \gammaq )=0.
\end{equation*}
Altogether this proves (b). Finally, for $k=4$ there again are two
possibilities: If $\beta=\betaq$ and $\gamma=\gammaq$ then
$h_{14}$ vanishes, whence we get
\begin{equation*}
  h_{24}  =  4 \beta^{2} ( \beta-3 )  ( 2 \beta -3 )
 ( \alphaq-\alpha )=0.
\end{equation*}
Note that here $\alpha\neq\alphaq$, since $c_\abc\neq c_\abcq$. On
the other hand, if $\beta=\betaq=\frac32$ then the conditions read
\begin{equation*}
  h_{14}  =  {\frac {81}{2}}\,( \gammaq- \gamma)=0,
  \quad
  h_{24}  =     {\frac {81}{2}}  ( 2 \gammaq+\gamma )  ( \gammaq-\gamma)=0.
\end{equation*}
This completes the proof.
\end{proof}
Alternatively, the preceding results could be derived from
\cite[Theorem~1]{degen-88} which describes contact of higher order between
curves in $d$-dimensional real projective space.
\par\para
In the following pictures we adopt once more the same alternative
point of view like in Figure \ref{abb1}, i.e., the plane with
equation $x_3=0$ is at infinity.
\par
In Figure \ref{abb3} two curves $c_\abc$ and $c_\abcq$ are
displayed. As $(\abc)=(0,\frac{1}{10},0)$ and $(\abcq) =
(1,3-\frac{1}{10},\frac{1}{10})$, they have contact of second
order at $U$.
\par
A family of curves $c_{\alpha,\beta,0}$ with
$\alpha=-3,-2,\ldots,3$ and $\beta=\frac32$ is shown in Figure
\ref{abb4}. All of them have mutually contact of order four at
$U$. These curves are, with respect to the chosen affine chart
($x_3\neq 0$), cubic hyperbolas for $\alpha<0$, a cubic parabola
for $\alpha=0$, and cubic ellipses for $\alpha>0$; the
corresponding values of $\alpha$ are written next to the images of
the curves. See also Figure \ref{abb2} for another picture of this
family, although with different values for $\alpha$ and $x_0=0$ as
plane at infinity.
{\unitlength1.0cm
      \begin{center}
      \begin{minipage}[t]{4.0cm}
         \begin{picture}(4.0,5.59)
         \put(0.0,0.25){\includegraphics[width=4.0cm]{./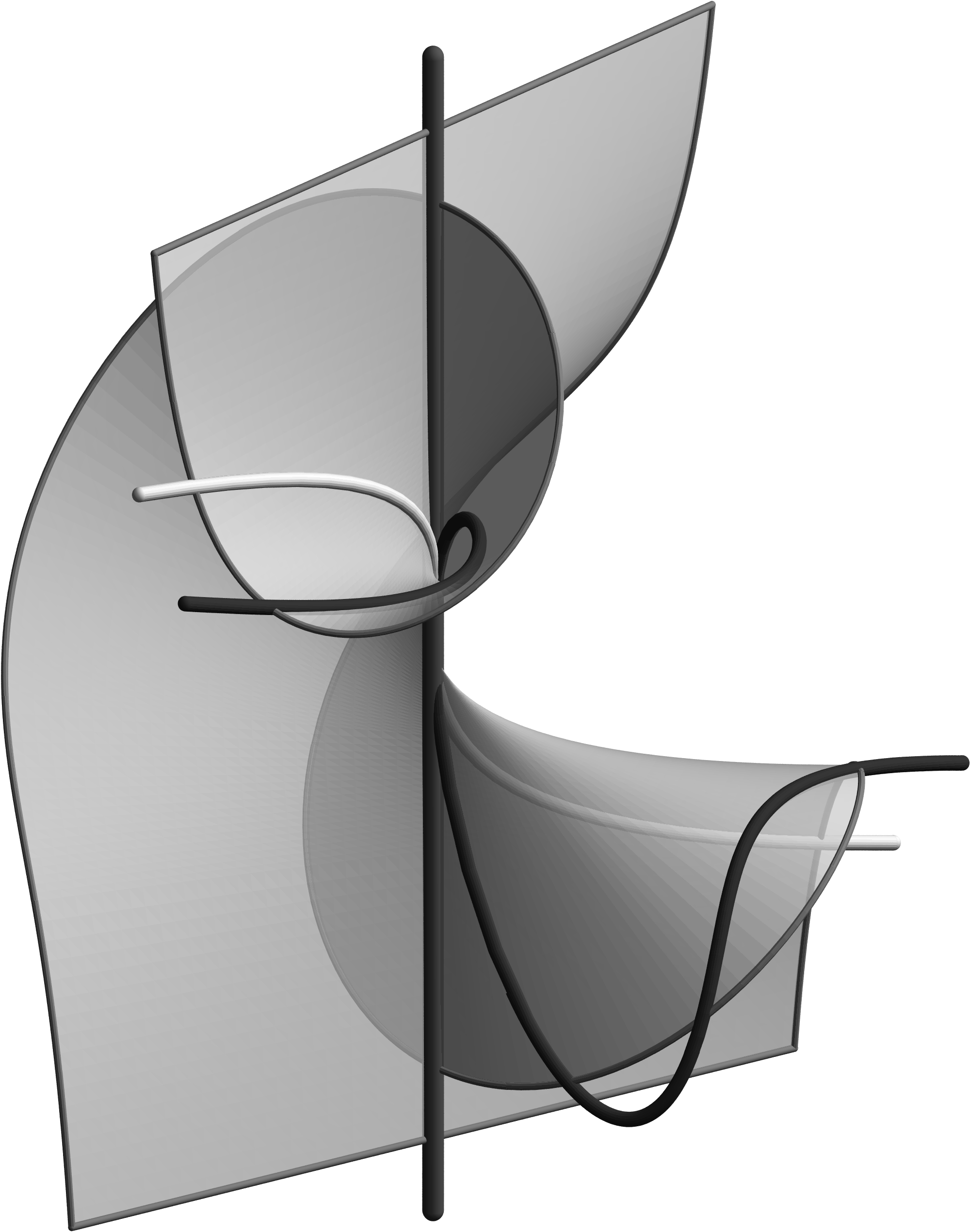}}
         \put(1.5,4.9){$t$}
         \put(1.9,2.5){$U$}
         \put(3.0,5.0){$F$}
         \put(0.8,3.55){$c_\abc$}
         \put(3.05,2.45){$c_\abcq$}
      \end{picture}
      \vspace{0.5mm}
      {\refstepcounter{abbildung}\label{abb3}
       \centerline{Figure \ref{abb3}.}}
      \end{minipage}
      \hspace{2.5cm}
      \begin{minipage}[t]{4.0cm}
         \begin{picture}(4.0,5.59)
         \put(0.0 ,0.0){\includegraphics[width=4.0cm]{./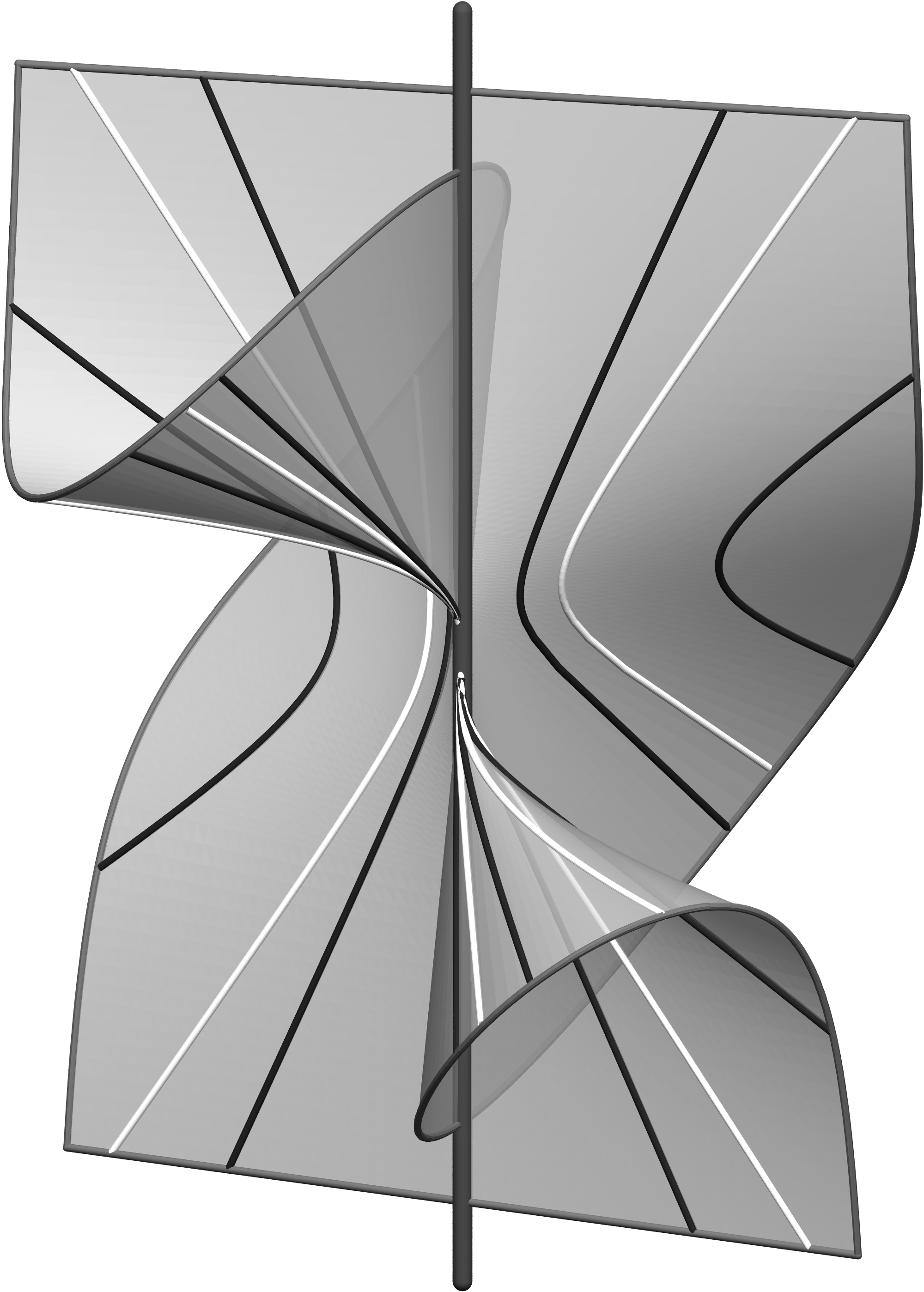}}
         \put(2.15,5.35){$t$}
         \put(4.0,4.6){$F$}
         \put(0.2,3.05){$\scriptstyle 0$}
         \put(2.8,1.35){$\scriptstyle 0$}
         \put(-0.2,4.2){$\scriptstyle 1$}
         \put(3.7,1.0){$\scriptstyle 1$}
         \put(3.5,-.15){$\scriptstyle 2$}
         \put(0.2,5.44){$\scriptstyle 2$}
         \put(0.8,5.40){$\scriptstyle 3$}
         \put(2.9,-.11){$\scriptstyle 3$}
         \put(4.05,3.9){$\scriptstyle -1$}
         \put(3.75,2.6){$\scriptstyle -1$}
         \put(-0.05,1.68){$\scriptstyle -1$}
         \put(2.25,0.5){$\scriptstyle -2$}
         \put(2.4,0.7){\vector(-3,4){0.3}}
         \put(1.5,0.7){\vector(3,4){0.4}}
         \put(1.35,0.55){$\scriptstyle -3$}
         \put(3.5 ,5.2){$\scriptstyle -2$}
         \put(3.42,2.15){$\scriptstyle -2$}
         \put(2.15,1.1){$\scriptstyle -1$}
         \put(2.9 ,5.24){$\scriptstyle -3$}
         \put(3.2,1.8){$\scriptstyle -3$}
         \put(0.11,0.22){$\scriptstyle -2$}
         \put(0.71 ,0.18){$\scriptstyle -3$}
         \put(0.1,3.6){$\scriptstyle -1$}
         \put(0.365,4.28){\vector(1,-1){0.4}}
         \put(0.065,4.34){$\scriptstyle -2$}
         \put(0.5,4.55){\vector(3,-4){0.4}}
         \put(0.2,4.61){$\scriptstyle -3$}
      \end{picture}
      \vspace{0.5mm}
      {\refstepcounter{abbildung}\label{abb4}
       \centerline{Figure \ref{abb4}.}}
      \end{minipage}
      \end{center}
}%

\par\para\label{para:2.4}
It follows from Theorem \ref{thm:1} that cubic parabolas $c_\abc$
with $\beta=\frac32$ play a special role. In order to explain this
from a geometric point of view we consider the \emph{tangent
surface} of a cubic parabola $c_\abc$ and, in particular, its
intersection with the plane at infinity. It is well known that
this is a conic $p_\abc$ together with the line $t$. In fact, via
the first derivative of the local parametrization
$\RR\to\PP_3(\RR) : u_1\mapsto \RR\big(\Phi_\abc((1,u_1)^\T)\big)$
of $c_\abc$ we see that $p_\abc\setminus\{U\}$ is given by
\begin{equation}\label{eq:p_lokal}
  u_1 \mapsto
  \RR\left(0,
        1,
        \frac {2u_1}{\beta},
        \frac{3-\beta}{\beta}\,u_1^2
          +\frac{2\gamma (\beta-1)}{\beta}\, u_1
          +\frac{\alpha}{\beta}-\gamma^2\right)^\T.
\end{equation}
The plane at infinity carries in a natural way the structure of an
\emph{isotropic\/} (or \emph{Galileian\/}) \emph{plane\/} with the absolute
flag $(U,t)$. Each point $\RR(0,1,x_1,x_2)^\T \in \omega\setminus t$ can be
identified with the point $(x_1,x_2)^\T\in\RR^{2\times 1}$. In this way the
standard basis of $\RR^{2\times 1}$ determines a unit length and a unit angle
in the isotropic plane \cite[pp.~11--16]{sachs-87}.
\par
From this point of view each $p_\abc$ is an \emph{isotropic circle}. By
(\ref{eq:p_lokal}), its \emph{isotropic curvature} \cite[p.~112]{sachs-87}
equals $\frac1 2\,\beta(3-\beta)\leq\frac98$; this bound is attained for
$\beta=\frac32$.
\par
It is well known that two isotropic circles $p_\abc$ and $p_\abcq$ have second
order contact at the point $U$ if, and only if, their isotropic curvatures are
the same \cite[pp.~41--42]{sachs-87}, i.e.\ for $\beta=\betaq$ or for
$\beta=3-\betaq$. From this observation one could also derive the assertion in
Theorem \ref{thm:1} (a) as follows: We introduce an auxiliary euclidean metric
in a neighbourhood of $U$, and we take into account that the ratio of the
euclidean curvatures at $U$ of the curves $c_\abc$ and $p_\abc$ (the curves
$c_\abcq$ and $p_\abcq$) equals $4:3$; see \cite[p.~212]{strub-64} for this
theorem of E.~Beltrami.
\par
The flag $(U,t,\omega)$ turns $\PP_3(\RR)$ into a \emph{twofold
isotropic} (or \emph{flag}) \emph{space}. The definition of metric
notions in this space is based upon the identification of
$\RR(1,x_1,x_2,x_3)^\T\in\PP_3(\RR)\setminus\omega$ with
$(x_1,x_2,x_3)^\T\in\RR^{3\times 1}$, and the canonical basis of
$\RR^{3\times 1}$; see \cite{brau-66}.
\par
By \cite[p.~137]{brau-67a}, each cubic parabola $c_\abc$ has the \emph{twofold
isotropic conical curvature} $\frac12\,\beta (3-\beta)\leq\frac98$. Hence the
following characterization follows.
\begin{thm}
Among all cubic parabolas $c_\abc$ on the Cayley surface $F$, the
cubic parabolas with $\beta=\frac32$ are precisely those with
maximal twofold isotropic conical curvature.
\end{thm}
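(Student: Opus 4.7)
The plan is to reduce the statement to a one-variable maximization problem by invoking the formula for the twofold isotropic conical curvature quoted just before the theorem. By \cite[p.~137]{brau-67a}, each cubic parabola $c_\abc$ on $F$ has twofold isotropic conical curvature
\[
\kappa(\abc) = \tfrac12\,\beta(3-\beta),
\]
which depends only on $\beta$. Since $\beta \in \RR \setminus \{0,3\}$ is the only relevant parameter (recall that $\beta \neq 0, 3$ is required for $c_\abc$ to be a cubic parabola), the problem reduces to maximizing the quadratic $f(\beta) = \tfrac12\beta(3-\beta)$ over this set.

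Next I would compute $f'(\beta) = \tfrac12(3-2\beta)$, which vanishes uniquely at $\beta = \tfrac32$, and observe that $f$ is a downward-opening parabola in $\beta$, so this critical point is the unique global maximum, with value $f(\tfrac32) = \tfrac98$. Since $\tfrac32 \notin \{0,3\}$, the maximum is actually attained within the admissible range of $\beta$. Therefore the cubic parabolas $c_\abc$ with $\beta = \tfrac32$ (and $\alpha,\gamma$ arbitrary) are precisely those realizing the maximal value $\tfrac98$ of the twofold isotropic conical curvature.

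There is no real obstacle here: once the curvature formula from Brauner is accepted, the assertion is a trivial single-variable maximization. The only subtlety worth flagging is that the admissible parameter set excludes $\beta = 0, 3$, but the maximizer $\beta = \tfrac32$ lies strictly between these forbidden values, so the maximum is genuinely attained on the family.
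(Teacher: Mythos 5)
Your proposal is correct and matches the paper's argument: the theorem is stated as an immediate consequence of Brauner's formula $\frac12\beta(3-\beta)\leq\frac98$ for the twofold isotropic conical curvature, with equality exactly at $\beta=\frac32$. Your explicit maximization of the downward-opening quadratic and the check that $\frac32\notin\{0,3\}$ just spell out what the paper leaves implicit.
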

\par
Yet another interpretation is as follows: The regular matrix
\begin{equation*}
  B_\beta:=\diag\left(1,\frac{3-\beta}\beta\,,\frac{3-\beta}\beta\,,\frac{3-\beta}\beta\right),
  \mbox{ where }\beta\in\RR\setminus \{ 0,3 \},
\end{equation*}
yields a \emph{homothetic transformation} of $\PP_3(\RR)$ which
maps the cubic parabola $c_{0,\beta,0}$ to $c_{0,3-\beta,0}$,
since
\begin{equation*}
  (B_\beta\circ\Phi_{0,\beta,0})\left( (u_0,u_1)^\T)\right)
  =
  \Phi_{0,3-\beta,0}\Big(\Big(u_0,\frac{3-\beta}\beta\,u_1\Big)^\T\Big)
  \mbox{ for all } (u_0,u_1)^\T\in\RR^{2\times 1}.
\end{equation*}
As all points at infinity are invariant, the corresponding
isotropic circles $p_{0,\beta,0}$ and $p_{0,3-\beta,0}$ coincide.
This homothetic transformation is identical if, and only if,
$\beta=\frac32$.
\par
The Cayley surface $F$ admits a $3$-parameter collineation group; see
\cite[p.~96]{brau-64} formula (9). The action of this group on the family of
all cubic parabolas $c_\abc$ is described in \cite[p.~97]{brau-64}, formula
(12). (In the last part of that formula some signs have been misprinted. The
text there should read $\alphaq =
-a_0^2\frac{\beta^2}4-a_0a_1\beta\gamma+a_1^2\alpha+b_0\beta$). By virtue of
this action, our previous result on homothetic transformations can be
generalized to other cubic parabolas on $F$.

\section{Dual contact of higher order}\label{se:dualkontakt}

\para
The question remains how to distinguish between cubic parabolas
$c_\abc$ and $c_\abcq$ satisfying the first condition
($\beta=\betaq$) in Theorem \ref{thm:1} (a), and those which meet
the second condition ($\beta=3-\betaq$). A similar question arises
for the two conditions in Theorem \ref{thm:1} (b). We shall see
that such a distinction is possible if we consider the \emph{dual
curves} which are formed by the osculating planes (i.e.\ cubic
developables). Recall that $c_\abc$ and $c_\abcq$ have, by
definition, \emph{dual contact of order $k$} at a common
osculating plane $\sigma$, if their dual curves have contact of
order $k$ at the ``point'' $\sigma$ of the dual projective space.
\par
We shall identify the dual of $\RR^{4\times 1}$ with the vector
space $\RR^{1\times 4}$ in the usual way; so planes (i.e.\ points
of the dual projective space) are given by non-zero \emph{row
vectors}. Thus, for example, a plane $\RR(y_0,y_1,y_2,y_3)$ is
tangent to the Cayley surface (\ref{eq:cayley}) if, and only if,
\begin{equation}\label{eq:dual_cayley}
   3y_0y_3^2 - 3y_1y_2y_3 + y_2^3=0.
\end{equation}
We note that all these tangent planes comprise a Cayley surface in
the dual space.

For each twisted cubic there exists a unique null polarity
(symplectic polarity) which takes each point of the twisted cubic
to its osculating plane. In particular, the null polarity of the
cubic parabola $c_{0,2,0}$ is induced by the linear bijection
\begin{equation}\label{eq:null}
    \RR^{4\times 1} \to \RR^{1\times 4}: \vx\mapsto
    (N_{0,2,0}\cdot\vx)^\T
    \mbox{ with } N_{0,2,0}:=\RMat4{0&0&0&1\\0&0&-1&0\\0&1&0&0\\-1&0&0&0}.
\end{equation}
\par
We are now in a position to prove the following result.

\begin{thm}\label{thm:2}
Distinct cubic parabolas $c_\abc$ and $c_\abcq$ on Cayley's ruled
surface have

\vspace{-10pt plus 1pt minus 1pt}
\begin{enumerate}\itemsep0mm 

\item second order dual contact at $\omega$ if, and only if,
$\beta=\betaq$;

\item third order dual contact at $\omega$ if, and only if,
$\beta=\betaq$ and $\gamma=\gammaq$, or  $\beta=\betaq=\frac52$;

\item fourth order dual contact at $\omega$ if, and only if,
$\beta=\betaq=\frac73$ and $\gamma=\gammaq$.
\end{enumerate}
\end{thm}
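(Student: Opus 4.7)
The plan is to imitate the proof of Theorem~\ref{thm:1}, but carried out in the dual projective space $\PP(\RR^{1\times 4})$. First I would parametrize the dual curve $c_{0,2,0}^{*}$ by applying the null polarity (\ref{eq:null}) to $\Phi_{0,2,0}$ and transposing; up to an irrelevant scalar this yields, in the chart $u_1=1$, the row-vector parametrization
\begin{equation*}
  u_0\mapsto(1,-3u_0,6u_0^2,-6u_0^3),
\end{equation*}
which equals $\omega=\RR(1,0,0,0)$ at $u_0=0$. Since $M_\abc$ sends $c_{0,2,0}$ to $c_\abc$, the contragredient action (right multiplication on row vectors by $M_\abc^{-1}$) sends $c_{0,2,0}^{*}$ to $c_\abc^{*}$. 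Consequently, the order of dual contact at $\omega$ of $c_\abc$ and $c_\abcq$ equals the order of contact at $\omega$ of $c_{0,2,0}^{*}$ with its image under right multiplication by the lower triangular matrix $\tilde G:=M_\abcq^{-1}M_\abc$. This $\tilde G$ fixes $\omega$, together with the tangent line and osculating plane of $c_{0,2,0}^{*}$ at $\omega$, so it plays exactly the role of the matrix $G$ in the proof of Theorem~\ref{thm:1}.

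Next I would choose two quadratic forms on $\RR^{1\times 4}$ whose common zero set is $c_{0,2,0}^{*}$ together with a single line, and whose tangent hyperplanes at $\omega$ are distinct. The three standard generators of the ideal of the dual twisted cubic are $2y_1^2-3y_0y_2$, $y_1y_2-3y_0y_3$ and $y_2^2-2y_1y_3$; at $\omega$ the third one is singular, while the first two have gradients $(0,0,-3,0)$ and $(0,0,0,-3)$, hence distinct tangent hyperplanes $y_2=0$ and $y_3=0$. Call these two forms $\tilde Q_1$ and $\tilde Q_2$, substitute $(1,-3u_0,6u_0^2,-6u_0^3)\,\tilde G$ into each $\tilde Q_n$, and expand, producing polynomials $\tilde H_n(u_0)=\sum_m\tilde h_{nm}u_0^m$. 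By the same reasoning used for (\ref{eq:expand}), dual contact of order $k$ at $\omega$ is equivalent to the vanishing of $\tilde h_{nm}$ for all $m\le k$ and both $n\in\{1,2\}$.

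The remaining work is algebraic bookkeeping. Because $\tilde G$ is lower triangular, only a few entries $\tilde g_{ij}$ ($i\ge j$) enter each low-order coefficient $\tilde h_{nm}$, exactly as in (\ref{eq:koeffizienten}). I would then compute these entries of $\tilde G=M_\abcq^{-1}M_\abc$ using the explicit form of $M_\abc^{-1}$ given in the proof of Theorem~\ref{thm:1} and substitute. I expect each resulting condition to factor: for $k=2$, with dominant factor $\beta-\betaq$ and no second branch analogous to $\beta=3-\betaq$, giving~(a); for $k=3$, a further factor in $\beta$ whose nonzero root is $\tfrac52$, giving~(b); and for $k=4$, after imposing $\beta=\betaq$ and $\gamma=\gammaq$, a factor in $\beta$ with root $\tfrac73$, giving~(c). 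The main obstacle, as in the proof of Theorem~\ref{thm:1}, is that these exceptional values of $\beta$ only surface after substantial cancellation, so a computer algebra system is again the natural tool.
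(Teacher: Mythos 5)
Your overall strategy is sound and is, at its core, the same as the paper's: reduce dual contact at $\omega$ to ordinary contact of two cubics related by a triangular matrix fixing the distinguished flag, then read off the conditions from the low-order coefficients of two quadratic forms, exactly as in part (i) of the proof of Theorem \ref{thm:1}. Your dual parametrization $u_0\mapsto(1,-3u_0,6u_0^2,-6u_0^3)$ is correct, the contragredient action by $M_\abc^{-1}$ on row vectors is the right one, and your quadrics $2y_1^2-3y_0y_2$ and $y_1y_2-3y_0y_3$ do contain the dual cubic, have distinct tangent hyperplanes $y_2=0$ and $y_3=0$ at $\omega$, and cut out a residual line $y_0=y_1=0$ that avoids $\omega$. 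The one genuine difference from the paper is that you stay in the dual space, where the fixed point is $(1,0,0,0)$ and $\tilde G$ acts on row vectors from the right (equivalently $\tilde G^\T$, which is \emph{upper} triangular, acts on columns), so the formulas (\ref{eq:koeffizienten}) cannot be reused literally and you must recompute their analogues. The paper instead conjugates by the null polarity $N_{0,2,0}$ of (\ref{eq:null}): the matrix $N_{0,2,0}^{-1}\cdot M_\abc^\T\cdot(M_\abcq^\T)^{-1}\cdot N_{0,2,0}$ is again lower triangular and fixes $U$, so the quadrics $Q_1,Q_2$ and the coefficients (\ref{eq:koeffizienten}) from Theorem \ref{thm:1} apply verbatim. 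The two routes are conjugate to one another; the paper's buys a shorter computation, yours is conceptually more direct.

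There is, however, a gap in your case analysis for part (c). Fourth order dual contact presupposes third order dual contact, and by part (b) the latter also holds on the branch $\beta=\betaq=\frac52$ with $\gamma\neq\gammaq$; you only examine the branch $\beta=\betaq$, $\gamma=\gammaq$. The paper disposes of the extra branch explicitly: for $\beta=\betaq=\frac52$ the fourth-order conditions read $h_{14}=\frac{75}{2}(\gamma-\gammaq)=0$ and then $h_{24}=\frac{25}{4}(\alpha-\alphaq)=0$, which together force $c_\abc=c_\abcq$, so this case cannot occur for distinct curves. You need to include this step (or an equivalent one) for (c) to be complete.
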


\begin{proof}
The matrix $(M_\abc^\T)^{-1}\cdot N_{0,2,0}$ determines a duality
of $\PP_3(\RR)$ which maps the set of points of $c_{0,2,0}$ onto
the set of osculating planes of $c_\abc$. Since the product of a
duality and the inverse of a duality is a collineation, we obtain
the following:
\par
The order of dual contact at $\omega$ of the given curves $c_\abc$
and $c_\abcq$ coincides with the order of contact at $U$ of the
cubic parabola $c_{0,2,0}$ and that cubic parabola which arises
from $c_{0,2,0}$ under the collineation given by the matrix
\begin{equation*}
\begin{array}{l}
  2\beta(\beta-3)\, N_{0,2,0}^{-1}\cdot
  M_\abc^{\T}\cdot(M_\abcq^\T)^{-1}\cdot N_{0,2,0} =\hfill
   \\
   \schieb{1.0}=\schieb{-0.5}
   \left(\schieb{-1}\begin{array}{*{3}{c@{\schieb2}}c}
   2( \beta-3 ) \betaq&0&0&0   \\
   2  \betaq ( \beta\gamma - \betaq\,\gammaq - \gamma + \gammaq  ) &
    2\betaq( \betaq-3 ) { }&0&0 \\
     \alphaq\beta - \alpha\betaq + \beta\betaq(\gamma^{2} - \gammaq^{2} )&
    0&
    2\beta (  \betaq - 3 ) &0\\
    * &
     ( \betaq - 3 )  ( \alpha\betaq  - \alphaq\beta ) &
     2\beta (  \betaq - 3 ) ( \gamma - \gammaq )&
     2\beta (  \betaq  - 3 )
     \end{array}\schieb{-1}\right)\schieb{-0.5}.
\end{array}
\end{equation*}
Here $*$ denotes an entry that will not be needed.
\par
We now proceed as in the proof of Theorem \ref{thm:1}. By
substituting the entries of the matrix above into
(\ref{eq:koeffizienten}), we read off necessary and sufficient
conditions for dual contact of order $k$ at the plane $\omega$ of
$c_\abc$ and $c_\abcq$.
\par
For $k=2$ we get the single condition
\begin{equation*}
  h_{22}= 4 \beta ( \betaq-3 ) ^{2} ( \beta-\betaq ) = 0
\end{equation*}
which proves the assertion in (a). By (a), we let $\beta=\betaq$
for the discussion of $k=3$. Then $h_{13}$ vanishes and we arrive
at the condition
\begin{equation*}
   h_{23} = 8\beta^{2}( \beta-3 )  ( 2\beta-5 )
 ( \gammaq-\gamma ) =0,
\end{equation*}
from which (b) is immediate. Finally, for $k=4$ we distinguish two
cases: If $\beta=\betaq$ and $\gamma=\gammaq$ then $h_{14}$
vanishes and we are lead to the condition
\begin{equation*}
  h_{24}  = 4 \beta^{2}( \beta-3 )  ( 3\beta-7 )
 ( \alphaq-\alpha ) =0.
\end{equation*}
Note that here $\alpha\neq\alphaq$, since $c_\abc\neq c_\abcq$.
The proof of (c) will be finished by showing that the case
$\beta=\betaq=\frac52$ does not occur. From the assumption
$\beta=\betaq=\frac52$ follows the first condition
\begin{equation*}
  h_{14}  = {\frac {75}{2}}\,(\gamma-\gammaq) =0.
\end{equation*}
Now, letting $\gamma=\gammaq$, the second condition
\begin{equation*}
  h_{24}  = {\frac {25}{4}}\,(\alpha-\alphaq) =0
\end{equation*}
is obtained. However, both conditions cannot be satisfied
simultaneously, since the first condition and $c_\abc\neq c_\abcq$
together imply that $\alpha\neq\alphaq$.
\end{proof}
\par\para
By combining the results of Theorem \ref{thm:1} and Theorem
\ref{thm:2}, it is an immediate task to decide whether or not two
(not necessarily distinct) cubic parabolas $c_\abc$ and $c_\abcq$
have contact at $U$ and at the same time dual contact at $\omega$
of prescribed orders. In particular, we infer that two cubic
parabolas of this kind, with fourth order contact at $U$ and
fourth order dual contact at $\omega$, are identical.

\par\para\label{para:3.2}
In this section we aim at explaining how the results of Theorems
\ref{thm:1} and \ref{thm:2} are related to each other.
\par
Let us choose a \emph{fixed} real number $\beta\neq 0,3$. We
consider the local parametrization
\begin{equation*}
    \Psi_\beta:\RR^2\to \PP_3(\RR) :
    (\alpha,u)\mapsto\RR\left(\Phi_{\alpha,\beta,0}((1,u)^\T)\right)
\end{equation*}
of $F$; its image is $F\setminus t$, i.e. the affine part of $F$.
For our fixed $\beta$ and $\gamma=0$ the affine parts of the
parabolic cylinders (\ref{eq:zylinder}) form a partition of
$\PP_3(\RR)\setminus\omega$; see Figure \ref{abb2}. Hence
$\Psi_\beta$ is injective so that through each point $P\in
F\setminus t$ there passes a unique curve $c_{\alpha,\beta,0}$.
Consequently, we can define a mapping $\Sigma$ of $F\setminus t$
into the dual projective space by
\begin{equation}\label{eq:Sigma}
  P\in c_{\alpha,\beta,0}\setminus\{U\}\stackrel\Sigma\longmapsto
  \mbox{ osculating plane of } c_{\alpha,\beta,0} \mbox{ at } P.
\end{equation}
\begin{thm}
The image of the affine part of the Cayley surface $F$ under the mapping
$\Sigma$ described in \emph{(\ref{eq:Sigma})} consists of tangent planes of a
Cayley surface for $\beta\neq 0,3,\frac83$, and of tangent planes of a
hyperbolic paraboloid for $\beta=\frac83$.
\end{thm}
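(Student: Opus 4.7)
My plan is to produce an explicit projective parametrization of $\Sigma(F\setminus t)$ inside the dual projective space, eliminate the parameters to extract its implicit equation, and then identify that equation up to projective equivalence.

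The first step is to determine the osculating plane of $c_{\alpha,\beta,0}$ at the affine point $\RR\Phi_{\alpha,\beta,0}((1,u)^{\T})$. It is convenient to use the null polarity of $c_{0,2,0}$ given by $N_{0,2,0}$ in (\ref{eq:null}) and transport it by $M_{\alpha,\beta,0}$; equivalently, one reads the osculating plane off from the first and second derivatives of the local parametrization. Writing $\delta := 3-\beta$ and taking $\gamma=0$, the computation yields (up to a projective scalar) the row vector
\[
 y(\alpha,u) \;=\; \bigl(\delta(u^{3}-3\alpha u),\ 3\alpha-3\delta u^{2},\ 3\beta\delta u,\ -3\beta\bigr).
\]

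The second step is elimination. Substituting these components into the cubic on the left side of (\ref{eq:dual_cayley}), a brief calculation gives
\[
 3y_{0}y_{3}^{2}-3y_{1}y_{2}y_{3} \;=\; 27\beta^{2}\delta(1-3\delta)\,u^{3}, \qquad y_{2}^{3} \;=\; 27\beta^{3}\delta^{3}\,u^{3},
\]
so every $y(\alpha,u)$ satisfies the single cubic identity
\[
 3\beta\delta^{2}\bigl(y_{0}y_{3}^{2}-y_{1}y_{2}y_{3}\bigr) + (3\delta-1)\,y_{2}^{3} \;=\; 0,
\]
and the image of $\Sigma$ lies on the surface cut out by this equation.

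The third step is the case distinction. If $\beta\neq\tfrac{8}{3}$, so that $3\delta-1\neq 0$, the diagonal change of coordinates $y_{0}=z_{0}$, $y_{1}=\mu^{-1}z_{1}$, $y_{2}=\mu z_{2}$, $y_{3}=z_{3}$ with real cube root $\mu := \sqrt[3]{\beta\delta^{2}/(3\delta-1)}$ converts the identity into the normal form $3z_{0}z_{3}^{2}-3z_{1}z_{2}z_{3}+z_{2}^{3}=0$ of (\ref{eq:dual_cayley}); thus the image is projectively a Cayley surface in the dual, i.e., it consists of tangent planes of a Cayley surface in $\PP_{3}(\RR)$. If $\beta=\tfrac{8}{3}$, the coefficient $3\delta-1$ vanishes, the identity collapses to $y_{3}(y_{0}y_{3}-y_{1}y_{2})=0$, and since $y_{3}=-3\beta\neq 0$ everywhere on the image, we find the image on the nondegenerate quadric $y_{0}y_{3}-y_{1}y_{2}=0$. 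Inverting the associated symmetric matrix, this dual quadric corresponds to tangent planes of the primal quadric $x_{0}x_{3}-x_{1}x_{2}=0$, which meets $\omega$ in the two lines $x_{1}=0$, $x_{2}=0$ through $U$ and is therefore a hyperbolic paraboloid. A short rank check for the Jacobian of $(\alpha,u)\mapsto y(\alpha,u)$ then confirms that the image is a dense, hence essentially full, subset of the respective surface.

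The genuine computational work is the explicit formula in Step 1; once that is in hand, Steps 2 and 3 amount to a brief algebraic identity and a two-case scaling analysis.
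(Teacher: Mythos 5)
Your argument is correct, and your Step 1 parametrization agrees (up to the irrelevant scalar $-6(\beta-3)$) with the row vector the paper obtains in (\ref{eq:polar}) via the transported null polarity $N_{\alpha,\beta,0}=(M_{\alpha,\beta,0}^{-1})^\T N_{0,2,0}M_{\alpha,\beta,0}^{-1}$. Where you diverge is the main case $\beta\neq\frac83$: you \emph{implicitize}, verifying that the image satisfies the single cubic $3\beta\delta^2(y_0y_3^2-y_1y_2y_3)+(3\delta-1)y_2^3=0$ and then reducing it to the normal form (\ref{eq:dual_cayley}) by the diagonal rescaling with $\mu^3=\beta\delta^2/(3\delta-1)$ (legitimate over $\RR$ since real cube roots always exist), whereas the paper stays parametric: it exhibits an explicit duality $D_\beta$ and shows that (\ref{eq:polar}) is exactly $(D_\beta\circ\Phi_{\alpha',\beta',0})((1,(\beta-3)u)^\T)^\T$ with $\alpha'=\alpha(\beta-3)$, $\beta'=\frac{3\beta-8}{\beta-3}$. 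Your route is arguably easier to check by hand, but it discards a byproduct the paper relies on afterwards: the explicit parameter $\beta'$ and the involution $\Lambda(\xi)=\frac{3\xi-8}{\xi-3}$, which the paper uses to explain the link between Theorem \ref{thm:1}(c) and Theorem \ref{thm:2}(c) and to discuss the fixed values $\beta=2,4$. The case $\beta=\frac83$ is handled essentially as in the paper ($y_3\neq0$ forces the image onto $y_0y_3-y_1y_2=0$, the tangent planes of a hyperbolic paraboloid). Two small remarks: the theorem only asserts containment in the set of tangent planes, so your closing Jacobian/density remark is unneeded (and only sketched); and in the Cayley case you should note explicitly that a linear change of dual coordinates is the contragredient of a collineation of $\PP_3(\RR)$, so planes satisfying the transformed equation are precisely the tangent planes of a projective image of $F$, i.e.\ of a Cayley surface.
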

\begin{proof}
As the null polarity of $c_{\alpha,\beta,0}$ arises from the
matrix
\begin{eqnarray}\label{eq:N_allgemein}
    N_{\alpha,\beta,0}&:=&
    (M_{\alpha,\beta,0}^{-1})^\T \cdot N_{0,2,0} \cdot
    M_{\alpha,\beta,0}^{-1}  \nonumber \\
    &{}=& 
    18(\beta-3)\,
    \Mat4{
    0&- \alpha  ( \beta-4 ) &0&- \beta  \\
    \alpha  ( \beta-4 ) &0&-\beta( \beta-3 )&0\\
    0&\beta( \beta-3 ) &0&0\\
    \beta&0&0&0
    },
\end{eqnarray}
so the $\Sigma$-image of a point
$P=\RR\left(\Phi_{\alpha,\beta,0}((1,u)^\T)\right)$ is the plane
which is described by the non-zero row vector
\begin{equation}\label{eq:polar}
  6(\beta-3) \Big(
  ( \beta-3)({u}^{2}- 3\alpha) u,
  -3(\beta -3){u}^{2} -3\alpha ,
  3 \beta ( \beta-3 ) u,
  3\beta
  \Big).
\end{equation}
In discussing $\Sigma(F\setminus t)$ there are two cases:

(i) Suppose that $\beta\neq\frac83$. Then a duality of
$\PP_3(\RR)$ is determined by the regular matrix
\begin{equation*}
    D_\beta:=
    \frac {18}{  \beta-3 }
    \Mat4{
    0&0&0&-(3\beta-8)\\
    0&0&-(3\beta-8)&0\\
    0&\beta( \beta-3 )^{2}&0&0\\
    \beta( \beta-3 )^{2}&0&0&0
    }.
\end{equation*}
Letting
\begin{equation}\label{eq:betastrich}
  \alpha':=\alpha(\beta-3) \mbox{ and }\beta':= {\frac
  {3\beta-8}{\beta-3}},
\end{equation}
the transpose of
$(D_\beta\circ\Phi_{\alpha',\beta',0})\left((1,(\beta-3)u)^\T\right)$
is easily seen to equal the row vector in (\ref{eq:polar}). Hence
$\Sigma(F\setminus t)$ is part of a Cayley surface in the dual
space which in turn, by (\ref{eq:dual_cayley}), is the set of
tangent planes of a Cayley surface in $\PP_3(\RR)$.
\par
(ii) If $\beta =\frac83$ then the row vector (\ref{eq:polar})
simplifies to
\begin{equation*}
  -2\left(\frac{  -( u^2-3\alpha )u}{3} ,
         u^2-3\alpha,-\frac{8u}{3},
         8\right)
\end{equation*}
Thus the set $\Sigma(F\setminus t)$ is part of the non-degenerate
ruled quadric in the dual space with equation $y_0y_3-y_1y_2=0$
(in terms of dual coordinates). In other words, $\Sigma(F\setminus
t)$ consists of tangent planes of a hyperbolic paraboloid in
$\PP_3(\RR)$.
\end{proof}

Let us add the following remark. The linear fractional
transformation
\begin{equation*}
   \Lambda :\RR\cup\{\infty\}\to\RR\cup\{\infty\} : \xi\mapsto \frac
  {3\xi-8}{\xi-3}
\end{equation*}
is an involution such that our fixed $\beta\neq 0,3,\frac83$ goes
over to $\beta'$, as defined in (\ref{eq:betastrich}), whereas
$\Lambda(\frac83)=0$. In particular, if $\beta=\frac73$ then
$\beta'=\Lambda(\beta)=\frac32$. This explains the relation
between Theorem \ref{thm:1} (c) and Theorem \ref{thm:2} (c).  Also
the fixed values of $\Lambda$ are noteworthy:
\par
For $\beta=\Lambda(\beta)=2$ the curves $c_{\alpha,2,0}$ are
\emph{asymptotic curves} of $F$, i.e., the osculating plane of
$c_{\alpha,2,0}$ at each point $P\neq U$ is the tangent plane of
$F$ at $P$. This means that the planes of the set
$\Sigma(F\setminus t)$ are tangent planes of $F$ rather than
tangent planes of another Cayley surface.
\par
For $\beta=\Lambda(\beta)=4$ it is immediate form
(\ref{eq:N_allgemein}) that the matrix $N_{\alpha,4,0}$ does not
depend on the parameter $\alpha\in\RR$, whence in this particular
case the mapping $\Sigma$ is merely the restriction of a null
polarity of $\PP_3(\RR)$ to the affine part of the Cayley surface
$F$.
\par\para
There remains the problem to find a geometric interpretation of
the value $\beta=\frac52$ which appears in Theorem \ref{thm:2}
(b).

\textbf{Acknowledgement.} The author is grateful to Friedrich
Manhart for many inspiring discussions.


\small

Hans Havlicek\\ Institut f\"ur Geometrie\\ Technische Universit\"at\\
Wiedner Hauptstra{\ss}e 8--10/1133\\ A-1040 Wien\\ Austria\\
email: \texttt{havlicek@geometrie.tuwien.ac.at}
\end{document}